\theoremstyle{change}%
\newtheorem{definition}{Definition:}[section]%
\newtheorem{proposition}[definition]{Proposition:}%
\newtheorem{theorem}[definition]{Theorem:}%
\newtheorem{lemma}[definition]{Lemma:}%
\newtheorem{corollary}[definition]{Corollary:}%
{\theorembodyfont{\rmfamily}\newtheorem{remark}[definition]{Remark:}}%
{\theorembodyfont{\rmfamily}\newtheorem{example}[definition]{Example:}}%
\newenvironment{proof}
{{\bf Proof:}}
{\qquad \hspace*{\fill} $\Box$}%
\newcommand{\fs}{\mathfrak{s}}%
\newcommand{\rme}{\mathrm{e}}%
\newcommand{\CC}{\mathcal{C}}%
\newcommand{\OC}{\mathcal{O}}%
\newcommand{\UC}{\mathcal{U}}%
\newcommand{\XC}{\mathcal{X}}%
\newcommand{\T}{\mathbb{T}}%
\newcommand{\R}{\mathbb{R}}%
\newcommand{\Z}{\mathbb{Z}}%
\begin{document}

	\title{Minimal-time trajectories of a linear control system on a homogeneous space of the 2D Lie group}%
	\author{V\'{\i}ctor Ayala \\
		Instituto de Alta Investigaci\'{o}n\\
		Universidad de Tarapac\'{a}, Arica, Chile \\
vayala@academicos.uta.cl \\\\
Adriano Da Silva\\
		Departamento de Matem\'atica,\\Universidad de Tarapac\'a, Iquique, Chile \\
  adasilva@academicos.uta.cl \\\\		
 Maria Torreblanca\\
		Instituto de Matem\'{a}ticas\\
		Universidad de San Agustín de Arequipa,  Perú\\
  mtorreblancat@unsa.edu.pe 
	}
	\date{\today }
	\maketitle
	
	\begin{abstract}
 
 Through the Pontryagin maximum principle, we solve a minimal-time problem for a linear control system on a cylinder, considered as a homogeneous space of the solvable Lie group of dimension two. The main result explicitly shows the existence of an optimal trajectory connecting every couple of arbitrary states on the manifold. It also gives a way to calculate the corresponding minimal time. Finally, the system admits points with two distinct minimal-time trajectories connecting them.
		
	\end{abstract}
	
	{\small {\bf Keywords:} } time-optimal problem, Pontryagin maximum principle, linear control system, homogeneous space.
	
	{\small {\bf Mathematics Subject Classification (2020): 49N05, 93C05, 22E25.}}%
	
	\section{Introduction}

The celebrated Pontryagin maximum principle, a 1962 Lenin Prize in Russia, is a fundamental mathematical result in the optimal control theory. Given a control system $\Sigma _{M}$  on a differential manifold $M$, the principle establishes necessary conditions for a two-point boundary value problem with a cost to optimize on the associated Hamiltonian of the system. The minimal-time problem involves connecting two states $x,y$ in $M$ through $\Sigma _{M}$, solution at minimum time.

The previous analysis depends on the system's controllability property, which guarantees the existence of an $\Sigma _{M}$ trajectory connecting the given two states. In a more general setting, it depends on the presence of a control set, which is a subset of the manifold where controllability holds in its interior.

Let $S$ be a Lie group with algebra $\fs$, considered the set of left-invariant vector fields on $S$. A Linear Control System $\Sigma _{S}$, LCS, is determined by three objects: The drift, the control vectors, and the class $\mathcal{U}$ of the admissible control functions \cite{AT99}, \cite{Markus}. 
The drift is a linear vector field $\mathcal{X}$ which is determined by its flows $\left\{\mathcal{X}_{t}:t \in \mathbb{R}\right\}$, a $1$-parameter group of $\mathrm{Aut}(S),$ the Lie group of $S$-automorphisms. Furthermore, by the Jacobi identity of the bracket, the linear map $DY = -\mathrm{ad}(\mathcal{X})Y =\left [\mathcal{X} ,Y\right]$   is a derivation of $\fs$. Any control vector belongs to $\mathfrak{s}$. And $\mathcal{U}$ is the class of piecewise constant functions with values on a compact subset of an Euclidean space.

Next, consider a closed subgroup $H \subset S$. According to the general theory, any left-invariant vector field $Y \in \mathfrak{s}$ can be projected to the quotient manifold $H \setminus S$. The same is no longer valid for a linear vector field $\mathcal{X}$. However, if the Lie algebra $\mathfrak{h}$ of $H$ is $D$-invariant,  it is possible to project $\Sigma _{S}$ into a linear control system $\Sigma _{H \setminus S}$ on the homogeneous space $H \setminus S$.

Through the Pontryagin maximum principle, we solve a minimal-time problem for a linear control system on the cylinder $\mathcal{C}_H$, considered a homogeneous space of the solvable Lie group $S$ of dimension two. In this case, we consider $\Omega=[-\rho, \rho]$ for $\rho>0$. The time optimal Hamiltonian function of $\Sigma _{\mathcal{C}_H}$ allows us to compute the corresponding Hamiltonian equations of the system and apply the principle.
Our main result explicitly shows the minimal-time trajectories. It also gives a way to calculate the minimal-time connecting two arbitrary given points on the state manifold. It turns out that the control function ${\bf u}$ associated with an optimal trajectory always belongs 
to the boundary $\partial\Omega=\{-\rho, \rho\}$ with only two possibilities: It is constant or changes from $-\rho$ to $\rho$ (or vice-versa) once. 

Several reasons support the relevance of this study. Among others, homogeneous spaces form a family of differentiable manifolds of particular importance in mathematics, physics, and applications. The Euclidean, projective, affine, Grassmannian, and hyperbolic spaces belong to this class. See \cite{2 dim type} for the classification of $2$-dimensional homogeneous spaces, including the plane, sphere, cylinder, torus, hyperbolic plane, etc. Furthermore, the Jouan Equivalence Theorem \cite[Theorem 4]{Jouan} shows that LCSs on homogeneous spaces classify nonlinear systems on manifolds. Precisely, it states that any control affine system on a connected manifold, whose vector fields are complete and transitive, is diffeomorphic to an LCS on a homogeneous space. Equivalent systems share the same topological and differentiable properties. In particular, the controllability property and the control sets. 

The following references contain results for the class of LCS on arbitrary and low-dimensional Lie groups and homogeneous spaces,\cite{dim2, homogeneous, dim3, semisimple, finite center, AJ16, ASM, DJ14, DS, Jouan11}. 
For general results on Lie groups, their homogeneous spaces, and control systems from a geometric point of view, we mention \cite{Ag1, Bourbaki2, Jbook, PBGM62, Sachkov09}.

In the sequel, we briefly mention the main contents of the manuscript. In Section $2$, we establish the Pontryagin maximum principle for a minimal-time problem to a general control affine system $\Sigma_M$ on a connected manifold $M$. We introduce in coordinates a linear control system on the solvable $2$-dimensional group $S$. And we characterize the system that can be projected to the cylinder. Section $3$ uses the fact that the projected system we are interested in is controllable \cite{dim2}. Through the Pontryagin maximum principle, we analyze the existence of a minimal-time trajectory between two arbitrary points of the cylinder. We distinguish two cases: when the optimal trajectory associated with the optimal control is constant and equal to one of the boundary points of $\Omega$, or when it switches one time between the boundary points of $\Omega$. After some reductions to the problem, we show that there are two possible curves of minimal-time connecting the given points, and the result is obtained by calculating the minimal-time needed to connect two points in the same fiber. We end the section with an excellent example showing the minimal-time to connect two points in the same fiber is never obtained by the constant control equal to zero, which leaves fibers invariant. In Section $4$, we first introduce a couple of special functions whose behavior allows us to describe Theorem 4.3. This main result will enable us to choose between the two possible trajectories of minimal-time. It also gives an explicit way to calculate the minimal-time connecting two arbitrary given points in the cylinder. Finally, a beautiful consequence is obtained: there exist points in the cylinder that admit two minimal-time trajectories connecting them.

 \section{Preliminaries}

\subsection{Control systems and the Pontryagin maximum principle}

Let $X_0, X_1, \ldots, X_m$ be smooth vector fields on a smooth finite dimensional manifold $M$ (Here smooth means $\mathcal{C}^{\infty} $). A {\it control system} on $M$ is determined by the family of ODEs
\begin{flalign*}
   &&\dot{x}=X_0(x)+\sum_{i=1}^mu_iX_i(x), \;\;\;\;\mbox{ where }\;\;\;u=(u_1, \ldots, u_m)\in\Omega,  &&\hspace{-1cm}\left(\Sigma_M\right)
   \end{flalign*}

where $\Omega\subset\R^m$ is a nonempty subset. Denote by $\UC$ the set of piecewise functions with the image in $\Omega$. For any $\bf{u}\in\UC$ and $x\in M$ we denote by $t\mapsto \phi(t, x, {\bf u})$ the piecewise differentiable curve on $M$ satisfying $\Sigma_M$ with $\phi(0, x, {\bf u})=x$. The {\it positive orbit} of $\Sigma_M$ at $x\in M$ is defined as  
	$$\mathcal{O}^+(x)=\{\phi(t, x, {\bf u}), t\geq 0, {\bf u}\in\UC\}\;\;\mbox{ and }\;\;\mathcal{O}^-(x)=\{\phi(-t, x, {\bf u}), t\geq 0, {\bf u}\in\UC\}.$$
 The system $\Sigma_M$ is said to satisfy the Lie algebra rank condition (LARC) if the Lie algebra $\mathcal{L}$ generated by the vector
	fields $X_0, X_1, \ldots, X_m$, satisfies $\mathcal{L}(x)=T_{x}M$ for all $x\in M$. The system $\Sigma_M$ is {\it controllable} if any two points in $M$ can be connected through a solution of the system in positive-time, that is if $M=\OC^+(x)$ for all $x\in M$. 

Once controllability is assured, asking the optimal way to connect points is natural. One of these optimal problems is to find the minimal-time trajectory connecting two given points. The Pontryagin maximum principle (PMP), which we recall below, is essential in this direction. The reader can consult \cite{Ag1} for more on the subject.

 The associated Hamiltonian is defined as
$$\mathcal{H}_u(\lambda, x)=\left\langle\lambda_x, X_0(x)+\sum_{i=1}^m u_iX_i(x)\right\rangle, \hspace{.5cm}\mbox{ where }\hspace{.5cm} \lambda_x\in T_x^*M.$$

Let us assume that ${\bf u}(t), t\in [0, \tau]$, is a control function associated with a minimal- time trajectory. That is, the solution $x(t)=\phi(t, x_0, {\bf u})$ of $\Sigma_M$ is the one with minimal-time among all the possible solutions of $\Sigma_{M}$ steering
$x(0)=x_0$ to $x_1=\phi(t, x_0, {\bf u})$. Then, the Pontryagin maximum principle states the existence  a Lipschitzian curve $(\lambda(t), x(t))$ in
the cotangent space $T^*M$ of $M$ satisfying (see \cite[Theorem 12.1]{Ag1})
\begin{enumerate}
    \item $\lambda(t)\neq 0$ for all $t\in [0, \tau]$
    \item $\mathcal{H}_{{\bf u}(t)}(\lambda(t), x(t))=\max_{u\in\Omega}\mathcal{H}_u(\lambda(t), x(t))$
    \item $\mathcal{H}_{{\bf u}(t)}(\lambda(t), x(t))\geq 0$ for all $t\in [0, \tau]$
    \item $(\lambda(t), x(t))$ satisfies the equations
    $$\left\{\begin{array}{l}
        \frac{d}{dt} x(t)=\frac{\partial}{\partial \lambda} \mathcal{H}_{{\bf u}(t)}(\lambda(t), x(t))= X(x(t))+\sum_{i=1}^m{\bf u}_i(t)X_i(x(t)) \\
        \\
         \frac{d}{dt} \lambda(t)=-\frac{\partial}{\partial x} \mathcal{H}_{{\bf u}(t)}(\lambda(t), x(t))
    \end{array}\right.$$
\end{enumerate}

\subsection{Linear control systems on 2D homogeneous spaces}

Let $S$ be the 2D solvable Lie group $(\R^2, *)$ where the product is defined as
	$$(z_1, w_1)*(z_2, w_2)=(z_1+z_2, w_1+\rme^{z_1}w_2).$$
	The Lie algebra $\fs$ of $S$ is given by the 2D vectorial space $(\R^2, [\cdot, \cdot])$ where the bracket is given by  
	$$[(\alpha_1, \beta_1), (\alpha_2, \beta_2)]=(0, \alpha_1\beta_2-\alpha_2\beta_1).$$
	With the previous setup, a left-invariant vector field and a linear vector field on $S$, respectively, read
	$$Y(z, w)=(\alpha, \rme^z\beta)\;\;\;\mbox{ and }\;\;\;\XC(z, w)=(0, bw+(\rme^z-1)a),$$
	where $(\alpha, \beta), (a, b)\in\R^2$. Hence, a {\it linear control system (LCS)} on $S$ is defined by the family of ODE's as follows  
   
   \begin{flalign*}
   &&\left\{
   \begin{array}{l}
   \dot{z}=u\alpha\\
   \dot{w}=bw+(\rme^z-1)a+u\rme^z\beta
   \end{array}\right., \;\;\;\;\mbox{ where }\;\;\;u\in\Omega,  &&\hspace{-1cm}\left(\Sigma_{S}\right)
   \end{flalign*}
  with $\Omega=[-\rho, \rho]$ for $\rho>0$. Moreover, $\Sigma_S$ satisfies the LARC if and only if $\alpha(a\alpha+b\beta)\neq 0$ (see \cite[Section 2.2]{dim2}). 

  From the general theory of LCSs, any control affine system satisfying the LARC, whose associated vector fields are complete and generate a finite-dimensional Lie algebra, is diffeomorphic to a linear system on a Lie group or to a control affine system on a homogeneous space induced by an LCS (see \cite[Theorem 4]{Jouan}). Due to this relevant fact, all the possible systems on the homogeneous of $S$ were classified in \cite{DSAyMA}. 

  The subgroup $H=\{0\}\times \Z\subset S$ is a closed subgroup and the homogeneous space 
  $H\setminus S$ is naturally identified with the horizontal cylinder $\CC_H:=\R\times \R/\Z$. Moreover, a system on $\CC_H$ is induced by an LCS on $S$ if and only if it has the form

\begin{flalign*}
&&\left\{\begin{array}{l}
\dot{x}=u\alpha\\
\dot{[y]}=(\rme^x-1)a+u\rme^x\beta
\end{array}\right., \;\;\;u\in\Omega  &&\hspace{-1cm}\left(\Sigma_{\CC_H}\right)
\end{flalign*}
Moreover, the Lie algebra generated by $X_0(x, [y]):=(0, (\rme^x-1)a)$ and $X_2(x, [y]):=(\alpha, \rme^x\beta)$ is given by 
$$\mathcal{L}_{\CC_H}(x, y)=\mathrm{span}\{(\alpha, \rme^x\beta), (0, a\alpha\rme^x)\},$$
implying that $\Sigma_{\CC_H}$ satisfies the LARC if and only if $a\alpha\neq 0$. Therefore, if $\Sigma_{\CC_H}$ satisfies the LARC, the map 
$$\varphi:\CC_H\rightarrow\CC_H, \hspace{1cm}\varphi(z, [y])=\left(x, \left[\frac{1}{a}\left(w-\frac{\beta}{\alpha}\rme^x\right)\right]\right),$$
is a diffeomorphism that conjugates $\Sigma_{\CC_H}$ to the system 
\begin{align}
 &\left\{
\begin{array}{rcl}
\dot{x}&=&u \nonumber\\
\dot{[y]} &=&(e^x-1)
\end{array}\right.,\hspace{2cm} \alpha u\in \Omega, \hspace{4cm} 
\end{align}

 \section{Minimal-time trajectories on $\CC_H$}

 In this section, we analyze the existence of minimal-time trajectories of an induced LCS on the horizontal cylinder $\CC_H$. 

 oBy the previous section, it is enough to consider the system. 

 \begin{align}
 &\left\{
\begin{array}{rcl}
\dot{x}&=&u \nonumber\\
\dot{[y]} &=&(e^x-1)
\end{array}\right.,\hspace{2cm} u\in \Omega=[-\rho, \rho], \hspace{4cm} \left(\Sigma_{\mathcal{C}_H}\right)
\end{align}
for some $\rho>0$. The solutions of $\Sigma_{\CC_H}$ starting at $P=(x, [y])$ for $u\in\Omega$ are given explicitly by 
$$\phi(t, P, u)=(x, \left[y +t(e^{x}-1)\right]), \hspace{.5cm}\mbox{ if }\hspace{.5cm}u=0,$$
and
\begin{equation}
\label{solutions}
\phi(t, P, u)=\left(x+ u t, \left[y +\frac{\rme^x}{u}(e^{u t}-1)-t\right]\right), \hspace{.5cm}\mbox{ if }\hspace{.5cm}u\neq 0.
\end{equation}
Since,
$$\lim_{t\rightarrow+\infty}\phi_1(t, P, \pm\rho)=\lim_{t\rightarrow+\infty}(x+u t)=\pm\rho\cdot+\infty,$$
we get that $\Sigma_{\CC_H}$ is controllable (see \cite[Theorem 4.3]{DSAyMA}). As a consequence, one can use the PMP to analyze the existence of minimal-time trajectories between any two given points of $\CC_H$.

Now, the fact that $\CC_H=\R\times\R/\Z$ implies that 
\[T_{P}^*\CC_H=\left({\mathbb R}^2\right)^*\cong {\mathbb R}^2,\]
and hence, any element $\lambda \in T_{P}^*\CC_H$ is identified with a vector of ${\mathbb R}^2$. For $\lambda=(p, q)\in T^*_P\CC_H$, the Hamiltonian of $\Sigma_{\CC_H}$, for the minimal-time problem, is given by
$$\mathcal{H}_{u}(P, \lambda)=\langle (u, (e^x-1)), (p, q)\rangle=pu+q(e^x-1).$$
Moreover, the Hamiltonian equations of $\Sigma_{\CC_H}$ are
\begin{equation}\left\{
\begin{array}{rcl}
\dot{p}&=&-qe^x\nonumber\\
 \dot{q} &=&0 \nonumber
\end{array}\right. \\
\end{equation}
Therefore, a control ${\bf u}\in\UC$ associated with a minimal-time trajectory $t\mapsto \phi(t, P_0, {\bf u})$
connecting $P_0$ and $P_1$ 
gives rise to a curve 
$$\lambda(t)=(p(t), q(t))\in T_{\phi(t, P_0, {\bf u})}\CC_H, \hspace{.5cm}\mbox{ with }\hspace{.5cm} \lambda(t)\neq 0 \hspace{.5cm}\mathrm{ a.e.}$$

From the fact that $\lambda(t)$ satisfy the Hamiltonian equations, we obtain that
$$\dot{q}=0\hspace{.5cm}\implies\hspace{.5cm} q(t)\equiv q_0\hspace{.5cm}\mbox{ and so }\hspace{.5cm}\dot{p}(t)=-q_0\mathrm{e}^{x(t)},$$
showing that 
$$p(t)\equiv p_0\neq 0\hspace{.5cm}\mbox{ when }\hspace{.5cm}q_0=0\hspace{.5cm}\mbox{ and }\hspace{.5cm}\dot{p}(t)\neq 0\hspace{.5cm}\mbox{ when }\hspace{.5cm} q_0\neq0.$$
Therefore, the function $p(t)$ changes sign at most one time. On the other hand, the fact that 
\begin{equation}
\mathcal{H}_{{\bf u}(t)}(\phi(t, P_0, {\bf u}), \lambda(t))=\max_{u\in\Omega}\left\{q(t)(\rme^{x(t)}-1)+u p(t)\right\},
\end{equation}
implies that ${\bf u}(t)\in\{-\rho, \rho\}$ a.e. and that ${\bf u}$ changes from $-\rho$ to $\rho$ (or vice-versa) at most one time. 

\subsection{The possible minimal-time trajectories}

In this section, we analyze the possible minimal-time trajectories. Let us fix $P_0, P_1\in\CC_H$ and assume the existence of a minimal-time trajectory
$$t\in [0, T]\mapsto\phi(t, P_0, {\bf u})\hspace{.5cm}\mbox{ with }\hspace{.5cm}\phi(T, P_0, {\bf u})=P_1.$$ 

By the previous section, the control function ${\bf u}$ associated with one such trajectory satisfies ${\bf u}(t)\in\{-\rho, \rho\}$ with ${\bf u}$ constant or with ${\bf u}$ changing one time between $-\rho$ and $\rho$.

\subsubsection{${\bf u}$ has no switches}

The next lemma gives us the minimal-time needed to connect two fibers $\mathbb{T}_{x}:=\{x\}\times \R/\Z$ with a constant control function.

\begin{lemma}
\label{fibers}
Let $u\in\Omega$ and assume that $x_0\neq x_1$. For any $u\in\Omega$ such that $u\cdot(x_1-x_0)>0$ it holds that
$$\phi(T, \T_{x_0},u)=\T_{x_1}\hspace{.5cm}\mbox{ where } \hspace{.5cm}T=\frac{x_1-x_0}{u}.$$
In particular, the minimal-time to connect two distinct fibers by a constant control is $\frac{|x_1-x_0|}{\rho}.$

\end{lemma}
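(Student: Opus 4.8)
The plan is to directly track the first coordinate of the explicit solution and then verify that the constant-control time is optimal among all admissible controls by a simple comparison argument. First I would observe that the fiber $\T_x = \{x\}\times\R/\Z$ is determined entirely by the first coordinate, so connecting $\T_{x_0}$ to $\T_{x_1}$ is equivalent to steering the first coordinate from $x_0$ to $x_1$; the second coordinate is irrelevant to reaching the target \emph{fiber}. Reading off \eqref{solutions}, the first coordinate of $\phi(t, P, u)$ is $\phi_1(t, P, u) = x + ut$ for $u\neq 0$. Setting $x + uT = x_1$ with $x = x_0$ gives immediately $T = (x_1 - x_0)/u$, and the hypothesis $u\cdot(x_1 - x_0) > 0$ guarantees $T > 0$, so this is a genuine forward-time trajectory. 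This establishes the first assertion $\phi(T, \T_{x_0}, u) = \T_{x_1}$.

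For the ``in particular'' claim, I would note that for a constant control $u\in\Omega=[-\rho,\rho]$ the arrival time is $T(u) = (x_1 - x_0)/u$, valid precisely when $u$ has the same sign as $x_1 - x_0$. Writing $u\cdot(x_1-x_0)>0$ as $|u| = u\cdot\operatorname{sign}(x_1-x_0)$, I get
\begin{equation}
T(u) = \frac{x_1 - x_0}{u} = \frac{|x_1 - x_0|}{|u|}. \nonumber
\end{equation}
Since $T(u)$ is a decreasing function of $|u|$ and the admissible constant controls satisfy $|u|\leq\rho$, the minimum over admissible $u$ is attained at $|u|=\rho$, i.e. at $u = \rho\,\operatorname{sign}(x_1 - x_0)$, yielding minimal time $|x_1 - x_0|/\rho$.

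The argument is essentially a one-line computation once the explicit solution \eqref{solutions} is in hand, so there is no serious obstacle. The only point requiring a little care is conceptual rather than computational: I must be clear that the lemma concerns reaching the target \emph{fiber} $\T_{x_1}$ and not a specific point on it, which is what makes the second coordinate drop out of the analysis. I would also remark in passing that $u=0$ is excluded here since a zero control leaves the first coordinate fixed and hence cannot connect distinct fibers, which is consistent with the hypothesis $u\cdot(x_1-x_0)>0$ forcing $u\neq 0$.
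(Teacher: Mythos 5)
Your argument is essentially the paper's own: read off the first coordinate $x_0+uT=x_1$ from the explicit solution, note $u\cdot(x_1-x_0)>0$ gives $T>0$, and minimize $T(u)=|x_1-x_0|/|u|$ over $|u|\le\rho$. The only step you elide is that your computation directly yields only the inclusion $\phi(T,\T_{x_0},u)\subset\T_{x_1}$, whereas the lemma asserts set equality; the paper closes this by observing $\phi(-T,\T_{x_1},u)\subset\T_{x_0}$ (equivalently, the time-$T$ flow acts on the target fiber by a translation of $\R/\Z$, hence surjectively), and you should add one line to the same effect.
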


\begin{proof}
    Let $P_i\in \T_{x_i}$ and $u\in\Omega$ such that $u\cdot(x_1-x_0)>0$. Then, 
    $$T=\frac{x_1-x_0}{u}>0\hspace{.5cm}\mbox{ and }\hspace{.5cm}\phi_1(T, P_0, u)=x_0+u T=x_0+u\frac{x_1-x_0}{u}=x_1,$$
    showing that $\phi(T, \T_{x_0}, u)\subset \T_{x_1}$. In the same way, it holds that 
    $$\phi(-T, \T_{x_1}, u)\subset\T_{x_0} \hspace{.5cm}\implies\hspace{.5cm}\phi(T, \T_{x_0}, u)=\T_{x_1}.$$
    To conclude the lemma, let us note that 
    $$\frac{|x_1-x_0|}{\rho}=\min\left\{\frac{x_1-x_0}{u}, \;\;u\in\Omega,\;\;u\cdot(x_1-x_0)>0\right\}$$
\end{proof}

\subsubsection{${\bf u}$ has one switch}

Let us now assume that ${\bf u}$ changes one time between $-\rho$ and $\rho$. We have two possibilities:
$$P_1=\phi(t_1, \phi(t_0, P_0, -\rho), \rho)\hspace{.5cm}\mbox{ and }\hspace{.5cm}P_1=\phi(s_1, \phi(s_0, P_0, \rho), -\rho),$$
with associated times $t_0+t_1$ and $s_0+s_1$, respectively. Therefore, to obtain the minimal-time trajectory connecting $P_0$ and $P_1$ 
in this case, it is enough to analyze the existence of $t_0, t_1, s_0, s_1$ and compare $t_0+t_1$ and $s_0+s_1$.

Using equations (\ref{solutions}) we get that 

$$
P_1=\phi(t_1, \phi(t_0, P_0, -\rho), \rho) \hspace{.5cm}\iff\hspace{.5cm}
\left\{
\begin{array}{l}
x_{0}- \rho t_{0}+\rho t_{1}=x_{1} \\
y_{0}-y_1-\frac{1}{\rho}e^{x_{0}}(\rme^{-\rho t_0}-1)-t_0+\frac{1}{\rho}e^{x_{0}-\rho t_0}(\rme^{\rho t_1}-1)-t_1\in\Z
\end{array}%
\right..
$$
Therefore, 
$$t_1=\frac{x_1-x_0}{\rho}+t_0\hspace{.5cm}\implies\hspace{.5cm}t_0+t_1=\frac{x_1-x_0}{\rho}+2t_0,$$
and 
$$y_{0}-y_1-\frac{1}{\rho}e^{x_{0}}(\rme^{-\rho t_0}-1)-t_0+\frac{1}{\rho}e^{x_{0}-\rho t_0}(\rme^{-\rho t_1}-1)-t_1=y_{0}-y_1+\frac{e^{x_{0}}}{\rho}\left(1-2e^{-\rho t_0%
}\right)+\frac{e^{x_1}}{\rho}-\left(\frac{x_1-x_0}{\rho}+2t_0\right).$$

Analogously, one gets 
$$
P_1=\phi(s_1, \phi(s_0, P_0, \rho), -\rho) \hspace{.5cm}\iff\hspace{.5cm}
\left\{
\begin{array}{l}
x_{0}+\rho s_{0}-\rho s_{1}=x_{1} \\
y_{0}-y_1+\frac{1}{\rho}e^{x_{0}}(\rme^{\rho s_0}-1)-s_0-\frac{1}{\rho}e^{x_{0}+\rho s_0}(\rme^{-\rho s_1}-1)-s_1\in\Z
\end{array}%
\right.,
$$
and hence, 
$$s_0=\frac{x_1-x_0}{\rho}+2s_1\hspace{.5cm}\implies\hspace{.5cm}s_0+s_1=\frac{x_1-x_0}{\rho}+2s_1,$$
and 
$$y_{0}-y_1+\frac{1}{\rho}e^{x_1}\left(2\rme^{\rho s_1}-1\right)-\frac{e^{x_0}}{\rho}-\left(\frac{x_1-x_0}{\rho}+2s_1\right)\in\Z.$$

\bigskip

\begin{proposition}

For any $P_0, P_1\in\mathcal{C}_H$ there exists $t_0, t_1, s_0, s_1\geq 0$ such that 
$$P_1=\phi(t_1, \phi(t_0, P_0, -\rho), \rho)\hspace{.5cm}\mbox{ and }\hspace{.5cm}P_1=\phi(s_1, \phi(s_0, P_0, \rho), -\rho).$$

\end{proposition}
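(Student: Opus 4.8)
The plan is to establish the two required trajectories separately but by the same mechanism, exploiting the scalar reductions already carried out just before the statement. By the symmetry $-\rho \leftrightarrow \rho$ of the two cases it suffices to explain the first one, $P_1 = \phi(t_1, \phi(t_0, P_0, -\rho), \rho)$, in detail; the second is identical after swapping signs. Writing $P_0 = (x_0, [y_0])$ and $P_1 = (x_1, [y_1])$, the first-coordinate equation $x_0 - \rho t_0 + \rho t_1 = x_1$ forces $t_1 = \frac{x_1 - x_0}{\rho} + t_0$, so the admissibility requirement $t_0, t_1 \geq 0$ is exactly $t_0 \geq t_0^{\min} := \max\{0, \frac{x_0 - x_1}{\rho}\}$. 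Substituting this into the second-coordinate congruence collapses the whole vector equation into the single scalar condition $F(t_0) \in \Z$, where
$$F(t_0) = y_0 - y_1 + \frac{e^{x_0}}{\rho}\left(1 - 2 e^{-\rho t_0}\right) + \frac{e^{x_1}}{\rho} - \frac{x_1 - x_0}{\rho} - 2 t_0$$
is precisely the function displayed above. Thus the problem becomes: find $t_0 \geq t_0^{\min}$ with $F(t_0)$ an integer.

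The heart of the argument is that $F$ is continuous on the ray $[t_0^{\min}, \infty)$ and diverges to $-\infty$ at its right end. Indeed, the bounded term $-\frac{2 e^{x_0}}{\rho} e^{-\rho t_0}$ tends to $0$ while the linear term $-2 t_0 \to -\infty$, so $F(t_0) \to -\infty$ as $t_0 \to +\infty$. By the intermediate value theorem the image $F([t_0^{\min}, \infty))$ therefore contains the entire half-line $(-\infty, F(t_0^{\min})]$, which in turn contains infinitely many integers. Choosing any $t_0 \geq t_0^{\min}$ with $F(t_0) \in \Z$ and setting $t_1 = \frac{x_1 - x_0}{\rho} + t_0$ yields admissible nonnegative times realizing $P_1 = \phi(t_1, \phi(t_0, P_0, -\rho), \rho)$.

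For the second trajectory I would run the mirror computation: the first-coordinate equation $x_0 + \rho s_0 - \rho s_1 = x_1$ gives $s_0 = \frac{x_1 - x_0}{\rho} + s_1$, with admissibility $s_1 \geq \max\{0, \frac{x_0 - x_1}{\rho}\}$, and the second-coordinate congruence reduces to $G(s_1) \in \Z$ for
$$G(s_1) = y_0 - y_1 + \frac{e^{x_1}}{\rho}\left(2 e^{\rho s_1} - 1\right) - \frac{e^{x_0}}{\rho} - \frac{x_1 - x_0}{\rho} - 2 s_1.$$
Here the exponential $\frac{2 e^{x_1}}{\rho} e^{\rho s_1}$ dominates the linear term, so $G(s_1) \to +\infty$ as $s_1 \to +\infty$; the image now contains a half-line $[G(s_1^{\min}), \infty)$, again meeting $\Z$, which produces the required $s_0, s_1 \geq 0$. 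The one step that deserves genuine care is making sure the divergence occurs inside the admissible set rather than being clipped by the constraints: since the admissible parameter range is in both cases a ray extending to $+\infty$, and the unboundedness of $F$ (resp. $G$) is exactly the $+\infty$ limit of the parameter, the lower bound $t_0^{\min}$ (resp. $s_1^{\min}$) is harmless — it never removes the tail that supplies the integer values. Everything else is bookkeeping: verifying that the reduced scalar identities, including the signs inside the mod-$\Z$ terms, are exactly the displayed $F$ and $G$, after which the conclusion is just the intermediate value theorem.
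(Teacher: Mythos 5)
Your proposal is correct and follows essentially the same route as the paper: reduce the two-point condition to the scalar requirements $F(t_0)\in\Z$ and $G(s_1)\in\Z$ on a ray of admissible parameters, then use continuity together with $F\to-\infty$ and $G\to+\infty$ to hit an integer. Your explicit handling of the admissibility lower bound $\max\{0,(x_0-x_1)/\rho\}$ is a slightly more careful bookkeeping of the constraint the paper writes as $\frac{x_1-x_0}{\rho}+2t_0\geq 0$, but the argument is the same.
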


\begin{proof}
    By the previous calculations, we only have to prove the existence of $t_0, s_1\geq 0$, satisfying
    $$y_{0}-y_1+\frac{e^{x_{0}}}{\rho}\left(1-2e^{-\rho t_0%
}\right)+\frac{e^{x_1}}{\rho}-\left(\frac{x_1-x_0}{\rho}+2t_0\right)\in\Z\hspace{.5cm}\mbox{ and }\hspace{.5cm} \frac{x_1-x_0}{\rho}+2t_0\geq 0,$$
and 
$$y_{0}-y_1+\frac{1}{\rho}e^{x_1}\left(2\rme^{\rho s_1}-1\right)-\frac{e^{x_0}}{\rho}-\left(\frac{x_1-x_0}{\rho}+2s_1\right)\in\Z \hspace{.5cm}\mbox{ and }\hspace{.5cm} \frac{x_1-x_0}{\rho}+2s_1\geq 0.$$

However, 
$$\lim_{t\rightarrow+\infty}\left(y_{0}-y_1+\frac{e^{x_{0}}}{\rho}\left(1-2e^{-\rho t}\right)+\frac{e^{x_1}}{\rho}-\left(\frac{x_1-x_0}{\rho}+2t\right)\right)=-\infty,$$
and 
$$\lim_{s\rightarrow+\infty}\left(y_{0}-y_1+\frac{1}{\rho}e^{x_1}\left(2\rme^{\rho s}-1\right)-\frac{e^{x_0}}{\rho}-\left(\frac{x_1-x_0}{\rho}+2s\right)\right)=+\infty,$$
certainly implies the existence of $t_0, s_1\geq 0$ satisfying the previous, proving the result.
\end{proof}

\subsection{Reductions and remarks}

This section shows that analyzing points in the same fiber is enough to obtain minimal-time.

In order to do that, let us define the functions  $F, G:[0, +\infty)\times\CC_H^2\rightarrow\R,$ by
$$F(t, P_0, P_1)=y_{0}-y_1+\frac{e^{x_{0}}}{\rho}\left(1-2e^{-\rho t%
}\right)+\frac{e^{x_1}}{\rho}-\left(\frac{x_1-x_0}{\rho}+2t\right)$$
and 
$$G(s, P_0, P_1)=y_{0}-y_1+\frac{1}{\rho}e^{x_1}\left(2\rme^{\rho s}-1\right)-\frac{e^{x_0}}{\rho}-\left(\frac{x_1-x_0}{\rho}+2s\right),$$
and consider
$$t_0=t_0(P_0, P_1):=\min\left\{t\geq 0, F(t, P_0, P_1)\in\Z\hspace{.3cm}\mbox{ and } \hspace{.3cm}\frac{x_1-x_0}{\rho}+2t\geq 0\right\}$$
$$s_1=s_1(P_0, P_1):=\min\left\{s\geq 0, G(s, P_0, P_1)\in\Z\hspace{.3cm}\mbox{ and } \hspace{.3cm}\frac{x_1-x_0}{\rho}+2s\geq 0\right\}.$$

The results in the previous section, together with the Pontryagin maximum principle, imply that the minimal-time necessary to connect two points $P_0, P_1$ is one of the following
$$T=\frac{x_1-x_0}{\rho}+2t_0\hspace{.5cm}\mbox{ or }\hspace{.5cm}S=\frac{x_1-x_0}{\rho}+2s_1.$$

\begin{itemize}
    \item[\bf (A)] The minimal-time does not depend on the representatives $[y_i]\in \T_{x_i}, i=0, 1$. 
    
    In fact, since 
    $$F(t, P_0, P_1)\in\Z\hspace{.5cm}\iff\hspace{.5cm}F(t, P_0, P_1)+n\in\Z, \hspace{.5cm}\forall n\in\Z,$$
    and the same holds for the function $G$; we have the independence of the representatives. 
\end{itemize}

Despite the simplicity, the previous remark will significantly help the analysis of the minimal-time. It will allow us to choose representatives, which simplifies the calculations.

\begin{itemize}
    \item[\bf(B)] The first reduction we can make is assuming that $x_0\leq x_1$. 
    
    The case $x_0>x_1$ can be recovered from analogous calculations by changing $t_0$ by $t_1$ and $s_1$ by $s_0$. 

\end{itemize}

The previous assumption only guarantees that the minimal-time to connect $P_0$ and $P_1$ is equal to the sum of the minimal-time needed to connect the fibers $\T_{x_0}$ and $\T_{x_1}$, with the minimal-time needed to connect two points in the same fiber. 
    
    In fact, under such assumption the equality $\phi(t_1, \phi(t_0, P_0, -\rho), \rho)$ assures that one leaves $P_0$ and reaches the point $\phi(t_0, P_0, -\rho)$ whose first coordinate satisfies
$$\phi_1(t_0, P_0, -\rho)=x_0-\rho t_0\leq x_0,$$
and then go to the point $P_1$ using the control $\rho$. As a consequence, the curve
\begin{equation}
    \label{minimalT}
    t\in[0, T]\mapsto\phi(t, P_0, {\bf u}^{-, +})\in\CC_H, \hspace{.5cm}\mbox{ where }\hspace{.5cm}{\bf u}^{-, +}(t)=\left\{\begin{array}{cc}
    -\rho & \mbox{ if } t\in [0, t_0] \\
   \rho  & \mbox{ if } t\in (t_0, t_0+t_1]
\end{array}\right.,
\end{equation}
intersects the fiber $\T_{x_0}$ at a point $P_0'$, in positive-time, before reaching the point $P_1$ (see Figure \ref{figd} left-side). Analogously, the curve  

\begin{equation}
    \label{minimalS}
    s\in[0, S]\mapsto\phi(s, P_0, {\bf u}^{+, -})\in\CC_H, \hspace{.5cm}\mbox{ where }\hspace{.5cm}{\bf u}^{+, -}(t)=\left\{\begin{array}{cc}
    \rho & \mbox{ if } s\in [0, s_0] \\
   -\rho  & \mbox{ if } s\in (s_0, s_0+s_1]
\end{array}\right.,
\end{equation}
intersects the fiber $\T_{x_1}$ at a point $P_1'$, in positive-time, before reaching the point $P_1$ (see Figure \ref{figd} right-side).

\begin{figure}[htbp!]
\begin{center}
\includegraphics[scale=.8]{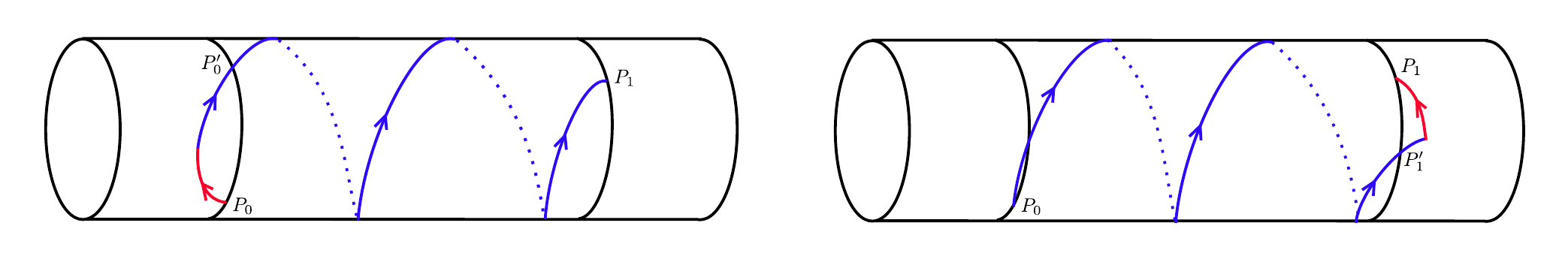}~\\
\caption{Trajectories associated with ${\bf u}^{-, +}$ and ${\bf u}^{+, -}$}
\label{figd}
\end{center}
\end{figure}

As a consequence, the minimal-time needed to connect $P_0$ and $P_1$ is determined by the minimum 
$$\min\{t_0(P_0, P_0'), s_1(P_1', P_1)\},$$
or equivalently, we have to analyze which of the functions 
$$F(t, P_0, P_0')\hspace{.5cm}\mbox{ and  }\hspace{.5cm}G(s, P_1', P_1),$$
arrives first in $\Z$. Since $P_i, P'_i$ belongs to the same fiber $\T_{x_i}, i=0,1$, the previous functions are given by 
\begin{equation}
\label{eq2}
\begin{aligned}
&F(t, P_0, P'_0)=y_{0}-y'_0+2\frac{e^{x_{0}}}{\rho}\left(1-e^{-\rho t%
}\right)-2t,&\hspace{4cm}\\
\mbox{ and }\hspace{2.3cm}& &\\
 &G(s, P'_1, P_1)=y'_{1}-y_1+2\frac{e^{x_1}}{\rho}\left(\rme^{\rho s}-1\right)-2s.&\hspace{4cm}
\end{aligned}
\end{equation}

\begin{itemize}
    \item[\bf (C)] The functions $F, G$ in (\ref{eq2}) only depends on $x_0, x_1$ and the difference $y_0-y_0'$.
\end{itemize}

In fact, the points $P'_0$ and $P_1'$  satisfy
$$\phi(\tau, P'_0, \rho)=P_1\hspace{.5cm}\mbox{ and }\hspace{.5cm}\phi(\tau, P_0, \rho)=P'_1, \hspace{.5cm}\mbox{ for }\hspace{.5cm}\tau=\frac{x_1-x_0}{\rho}.$$
These relations together with $P_0'=(x_0, [y_0'])$ and $P_1'=(x_1, [y'_1])$
gives us that 
$$y'_0-y_1+\frac{e^{x_0}}{\rho}\left(e^{\rho\tau}-1\right)-\tau\in\Z\hspace{.5cm}\mbox{ and }\hspace{.5cm}y_0-y'_1+\frac{e^{x_0}}{\rho}\left(e^{\rho\tau}-1\right)-\tau\in\Z,
$$
and hence $(y_0-y'_0)\equiv(y'_1-y_1)$. Since by (A) the minimal-time is independent of the representatives of $[y_i], [y'_i], i=0, 1$, we obtain that
\begin{equation}
\label{eq3}
\begin{aligned}
&F(t, P_0, P'_0)=y_{0}-y_0'+2\frac{e^{x_{0}}}{\rho}(1-e^{-\rho t})-2t,&\hspace{4cm}\\
\mbox{ and }\hspace{2.3cm}& &\\
 &G(s, P'_1, P_1)=y_0-y_0'+2\frac{e^{x_1}}{\rho}(e^{\rho s}-1)-2s,&\hspace{4cm}
\end{aligned}
\end{equation}
with $y_0-y_0'\in [0, 1)$.

\begin{remark}

It is not hard to see that 
$$y_0-y_0'\equiv (y_0-y_1)+\frac{e^{x_0}}{\rho}\left(e^{\rho\tau}-1\right)-\tau\hspace{.5cm}\mbox{ where }\hspace{.5cm}\tau=\frac{x_1-x_0}{\rho},$$
is the minimal-time needed to go from the fiber $\T_{x_0}$ to the fiber $\T_{x_1}$. Therefore, 
$$y_0-y_0'\equiv 0\hspace{.5cm}\iff \hspace{.5cm}\phi(\tau, P_0, \rho)=P_1,$$ 
that is, $y_0-y_0'$ is the distance, in the fiber $\T_{x_1}$, of the points $\phi(\tau, P_0, \rho)$ and $P_1$.

\end{remark}

The following example compares the minimal-time needed to connect two distinct points in the same fiber $\T_{x}, x\neq 0$ in two ways. Precisely, we compare the time through the trivial control with the one obtained by switching the control once.

\begin{example}

Let $P_0=(x_0, [y_0])$ and $P_1=(x_0, [y_1])$ two distinct points in the same fiber. Assume $y_0-y_1\in (0, 1)$ and $x_0>0$. The function 
$$H(t, P_0, P_1)=y_0-y_1+t(\rme^{x_0}-1),$$
satisfies
$$H(t, P_0, P_1)\in\Z\hspace{.5cm}\iff\hspace{.5cm}\phi(t, P_0, 0)=P_1.$$
As a consequence, the minimal-time needed to connect the points $P_0$ and $P_1$ through the trivial control is 
$$\min\{t>0, \;H(t, P_0, P_1)\in\Z\}.$$
On the other hand, by simple calculations, we obtain 
$$G(t, P_0, P_1)-H(2t, P_0, P_1)=\frac{2\rme^{x_0}}{\rho}(\rme^{\rho t}-1-\rho t)>0.$$
Since we are assuming $x_0>1$,
$$t_0=\min\{t>0, \;H(t, P_0, P_1)\in\Z\}\hspace{.5cm}\implies\hspace{.5cm}H(t_0, P_0, P_1)=1\hspace{.5cm}\implies\hspace{.5cm}G\left(\frac{t_0}{2}, P_0, P_1\right)>1.$$
Hence, there exists $s\in\left(0, \frac{t_0}{2}\right)$ such that $G\left(s, P_0, P_1\right)=1$ Therefore, the time needed to connect $P_0$ to $P_1$ by the curve (\ref{minimalS}) is 
$S=2s<t_0$.
Analogously, if $x>0$, we can show that the time $T$ needed to connect $P_0$ to $P_1$ by the curve (\ref{minimalT}) satisfies $T<t_0$. In both cases, we conclude that, for $x_0\neq 0$, the minimal-time to connect different points in the fiber $\T_{x_0}$ is less than $t_0$.
\end{example}

\section{The main results}

This section shows all the possible minimal-time trajectories connecting two distinct points in the cylinder $\CC_H$. We start with a preliminary section analyzing the behavior of a pair of functions that appear naturally in the proof of the main results.

\subsection{A race to $\Z$}

Let $2a\in(0, 1), \rho>0$ and define the maps $F, G:\R^+\times\R\rightarrow\R$ as  
$$F(t, x)=a+\frac{e^{x}}{\rho}(1-e^{-\rho t})-t,\hspace{.5cm}
\mbox{ and }\hspace{.5cm}G(t, x)=a+\frac{e^{x}}{\rho}(e^{\rho t}-1)-t.$$
In this section, we are concerned with the following problem: Let $x_0\leq x_1$ and consider the smallest positive real numbers $t, s$ satisfying
$$F(t, x_0)\in\Z\hspace{.5cm}\mbox{ and }\hspace{.5cm}G(s, x_1)\in\Z.$$
What is the relative position of $t, s$ in the semi-axis $(0, +\infty)$?

 This analysis will be critical in the following sections since it provides information about the minimal-time needed to connect two distinct points in a cylinder.

We start by noticing that 
$$F(t, x)-G(t, x)=\frac{e^{x}}{\rho}(1-e^{-\rho t})-\frac{e^{x}}{\rho}(e^{\rho t}-1)=2\frac{e^{x}}{\rho}(1-\cosh(\rho t))<0.$$
Moreover, derivation on the first variable gives us that 
$$\frac{\partial F}{\partial t}(t, x)=\mathrm{e}^{x-\rho t}-1\hspace{.5cm}\mbox{ and }\hspace{.5cm}\frac{\partial G}{\partial t}(t, x)=\mathrm{e}^{x+\rho t}-1,$$
showing that the partial maps $F(\cdot, x)$ and $G(\cdot, x)$ have at most one (not simultaneously) critical point (see the left-hand side of Figure \ref{fige}). Moreover, it holds that 
$$\lim_{t\rightarrow+\infty}F(t, x)=-\infty\hspace{.5cm}\mbox{ and }\hspace{.5cm}\lim_{t\rightarrow+\infty}G(t, x)=+\infty\hspace{.5cm}.$$
Since we are assuming that $2a\in (0, 1)$, for any $x\in\R$ there exist unique $t, s\in \R^+$ such that 
\begin{equation}
    \label{01}
    F(t, x)=0\hspace{.5cm}\mbox{ and }\hspace{.5cm}G(s, x)=1.
\end{equation}

The next result reduces our analysis. 

\begin{proposition}
    Let $x_0\leq x_1$ and $t, s$ the smallest positive real numbers satisfying 
    $$F(t, x_0)\in\Z\hspace{.5cm}\mbox{ and }\hspace{.5cm}G(s, x_1)\in\Z.$$
    Then, the minimal-time $\tau=\min\{t, s\}$ satisfies (at least) one of the equations in (\ref{01}).
\end{proposition}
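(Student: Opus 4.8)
The plan is to show that among all positive reals where $F(\cdot,x_0)$ or $G(\cdot,x_1)$ land in $\Z$, the very first such time $\tau=\min\{t,s\}$ must correspond either to the specific value $F=0$ or to the specific value $G=1$, rather than to some other integer value. First I would exploit the monotonicity structure already established. Note that $F(0,x)=a\in(0,1/2)$ and $G(0,x)=a\in(0,1/2)$, so both functions start strictly between $0$ and $1$ at $t=0$. The established limits $F(t,x)\to-\infty$ and $G(t,x)\to+\infty$ as $t\to+\infty$, together with the fact that each partial map has at most one critical point, pin down the shape of each curve: $F(\cdot,x_0)$ eventually decreases to $-\infty$, so the first integer it reaches must be $0$ (it cannot cross a positive integer before leaving the interval $[0,1)$ downward, and $1$ would require it to first increase past $1$); and $G(\cdot,x_1)$ eventually increases to $+\infty$, so the first integer it reaches must be $1$ (it starts in $(0,1)$ and the first integer boundary encountered going up is $1$).

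The key case analysis concerns the possible single critical point. For $F(\cdot,x_0)$: if $x_0\le 0$ then $\partial F/\partial t = e^{x_0-\rho t}-1\le 0$ for all $t\ge0$, so $F$ is monotonically decreasing and the first integer it meets is $0$. If $x_0>0$, then $F$ increases on $[0,x_0/\rho]$ up to a maximum and then decreases to $-\infty$; here I must rule out that $F$ crosses $1$ on the way up before eventually hitting $0$ on the way down — but whether or not it touches $1$, the smallest $t$ with $F(t,x_0)\in\Z$ is attained either at the first upward crossing of $1$ or at the downward crossing of $0$, and I claim the relevant comparison against $s$ still reduces to one of the two equations in $(\ref{01})$. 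Symmetrically for $G(\cdot,x_1)$: if $x_1\ge 0$ then $\partial G/\partial t=e^{x_1+\rho t}-1\ge 0$, so $G$ increases monotonically and its first integer value is $1$; the remaining subcase $x_1<0$ gives a single minimum before the eventual increase.

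The cleanest route I would take is to argue directly that the minimizing time $\tau$ realizes one of the two canonical equations. Suppose $\tau=t$, i.e.\ $F(\tau,x_0)\in\Z$ is the earliest integer hit by either function. Since $F(0,x_0)=a\in(0,1)$ and $F$ is continuous, the first integer value it can reach is either $0$ (crossing down) or $1$ (crossing up after the critical point). If it is $0$ we are done, as then $F(\tau,x_0)=0$. If instead $F$ reaches $1$ first, this forces $x_0>0$ and $\tau$ to lie before the critical point; but then I would use the inequality $F(t,x)<G(t,x)$ for all $t>0$ established at the start of the subsection, combined with $x_0\le x_1$ (which makes $G(\cdot,x_1)$ rise at least as fast as $G(\cdot,x_0)$), to show that $G(\cdot,x_1)$ would have already reached $1$ at some time strictly before $\tau$, contradicting minimality of $\tau=t$. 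The symmetric argument handles $\tau=s$.

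The main obstacle will be controlling the $F=1$ possibility and showing it is incompatible with $t$ being the minimizer. The heart of the matter is the comparison $F(t,x_0)<G(t,x_0)\le G(t,x_1)$, where the first inequality is the strict identity $F-G=2\tfrac{e^x}{\rho}(1-\cosh(\rho t))<0$ already recorded, and the second follows from monotonicity of $G$ in $x$ together with $x_0\le x_1$. From these I expect to deduce that whenever $F(\cdot,x_0)$ climbs to $1$, the function $G(\cdot,x_1)$ has strictly exceeded $1$ earlier, so that the $G=1$ crossing precedes the $F=1$ crossing and $\tau$ cannot be $t$ with value $1$. Verifying that this comparison genuinely forces an \emph{earlier} crossing (not merely a larger value at the same time) is the delicate quantitative point, and I would formalize it using the intermediate value theorem applied to $G(\cdot,x_1)-1$ on the interval $[0,\tau]$.
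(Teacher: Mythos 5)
Your proposal is correct and follows essentially the same route as the paper: reduce to the case $\tau=t$, observe that the first integer reached from the starting value $a\in(0,1)$ must be $0$ or $1$, and rule out $F(\tau,x_0)=1$ by combining the strict inequality $F(t,x)<G(t,x)$ with the monotonicity of $G$ in $x$ and the intermediate value theorem to produce an earlier $G=1$ crossing, contradicting minimality. The paper's proof is exactly this argument (with the symmetric case likewise left as analogous), so no further comparison is needed.
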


\begin{proof}
    Since the cases are analogous, let us assume that $\tau=t$. Now, the fact that $2a\in(0, 1)$ implies that $F(t, x_0)\in\{0, 1\}$. We have nothing to prove if $F(t, x_0)=0$. Let us then assume $F(t, x_0)=1$. In this case, $F(\cdot, x_0)$ necessarily admits a critical point, and hence $G(\cdot, x)$ is strictly increasing for any $x\geq x_0$. Therefore, 
    $$G(s, x_1)\in\Z\hspace{.5cm}\iff\hspace{.5cm} G(s, x_1)=1.$$
    On the other hand,
    $$x_1\geq x_0\hspace{.5cm}\implies\hspace{.5cm}G(t, x_1)\geq G(t, x_0)>F(t, x_0)=1.$$
    Since $G(0, x_1)=a<1$ there exists, by continuity $0<s'<t$ such that $G(s', x_1)=1$. However, the fact that $G(\cdot, x_1)$ is strictly increasing forces $s=s'<t$, contradicting the fact $t=\min\{t, s\}$, ending the proof.
\end{proof}

\bigskip

By the previous proposition, we only have to analyze possible values of $x\in\R$ and $t, s>0$ satisfying the equations in (\ref{01}). Note that
$$F(t, x)=0\hspace{.5cm}\iff\hspace{.5cm} \mathrm{e}^x=\rho\frac{t-a}{1-\rme^{-\rho t}}\hspace{.5cm}\mbox{ and }\hspace{.5cm} G(t, x)=1\hspace{.5cm}\iff\hspace{.5cm} \mathrm{e}^x=\rho\frac{1+t-a}{\rme^{\rho t}-1}.$$

As a consequence, the analysis of the functions
$$f(t):=\rho\frac{t-a}{1-\rme^{-\rho t}}\hspace{.5cm}\mbox{ and }\hspace{.5cm} g(t):=\rho\frac{1+t-a}{\rme^{\rho t}-1},$$
determine the points $(t, x)$ satisfying relations (\ref{01}). The following result describes the behavior of $f$ and $g$. The proof is straightforward, and we will omit it. Moreover, the properties of the functions presented in the following result are depicted on the right-hand side of Figure \ref{fige}.

\begin{lemma}
    For the functions $f, g$ defined previously, 
    it holds:
    \begin{enumerate}
        \item $\lim_{t\rightarrow 0^+}f(t)=-\infty$ and $\lim_{t\rightarrow+\infty}f(t)=+\infty$
        
        \item $\lim_{t\rightarrow 0^+}g(t)=+\infty$ and $\lim_{t\rightarrow+\infty}g(t)=0$

\item $f(t)=0$ iff $t=a$ and $g(t)>0$ for all $t>0$.

        \item $f'(t)>0$ and $g'(t)<0$ for all $t>0$.
    \end{enumerate}
    
\end{lemma}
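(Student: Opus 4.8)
The plan is to verify each of the four claimed properties of
$$f(t)=\rho\,\frac{t-a}{1-\mathrm{e}^{-\rho t}}\qquad\text{and}\qquad g(t)=\rho\,\frac{1+t-a}{\mathrm{e}^{\rho t}-1}$$
by elementary limit and derivative computations, exploiting the fact that $2a\in(0,1)$, so in particular $a\in(0,1/2)$ and $1+t-a>0$ for all $t>0$.

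For the limits (items 1 and 2), I would first handle the behaviour at $0^+$. For $f$, the denominator $1-\mathrm{e}^{-\rho t}\to 0^+$ while the numerator $\rho(t-a)\to -\rho a<0$, giving $f(t)\to-\infty$; for $g$, the denominator $\mathrm{e}^{\rho t}-1\to 0^+$ while the numerator $\rho(1+t-a)\to\rho(1-a)>0$, giving $g(t)\to+\infty$. At $+\infty$: for $f$ the denominator tends to $1$ and the numerator grows linearly, so $f(t)\to+\infty$; for $g$ the denominator grows exponentially while the numerator is linear, so $g(t)\to 0^+$. Item 3 is immediate: $f(t)=0$ forces $t-a=0$, i.e.\ $t=a$ (the denominator never vanishes for $t>0$), and $g(t)>0$ for all $t>0$ since both numerator $\rho(1+t-a)$ and denominator $\mathrm{e}^{\rho t}-1$ are positive there.

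The only step requiring genuine work is the monotonicity in item 4, where the quotient rule produces signs that are not visually obvious. For $f$, differentiating gives a numerator whose sign I expect to reduce to an inequality of the form $1-\mathrm{e}^{-\rho t}-\rho(t-a)\mathrm{e}^{-\rho t}$ (up to a positive factor), and I would show this is positive for all $t>0$ by introducing the auxiliary function equal to that numerator, checking its value at $t=0$ and the sign of its derivative, so that a single monotonicity argument on the auxiliary function closes the case; the hypothesis $a>0$ is exactly what makes the boundary term work in one's favour. The computation for $g'(t)<0$ is the mirror image, reducing to showing an analogous expression stays negative, again via an auxiliary function together with the bound $1+t-a>1>0$. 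I expect this sign analysis of the derivatives to be the main obstacle, since the raw quotient-rule expressions are messy and the conclusion hinges on choosing the right auxiliary function and reading off the correct sign at the endpoint; the rest is routine. This is precisely why the authors, having set it up so the signs fall out cleanly, remark that the proof is straightforward and omit it.
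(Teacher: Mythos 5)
The paper offers no proof of this lemma (the authors declare it straightforward and omit it), so there is no argument to compare against; your elementary limit-and-derivative computations are clearly the intended route, and your treatment of items 1--3 is complete and correct as written.

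Two points in your plan for item 4 would fail if executed literally. First, for $f$ the quotient rule gives $f'(t)=\rho\,N(t)/(1-\mathrm{e}^{-\rho t})^{2}$ with $N(t)=1-\mathrm{e}^{-\rho t}-\rho(t-a)\mathrm{e}^{-\rho t}$, and this $N$ is \emph{not} monotone: $N'(t)=\rho^{2}(t-a)\mathrm{e}^{-\rho t}$ is negative on $(0,a)$, so ``positive at $t=0$ plus increasing derivative'' does not close the case for this choice of auxiliary function. You must either evaluate $N$ at its interior minimum $t=a$, where $N(a)=1-\mathrm{e}^{-\rho a}>0$ precisely because $a>0$, or first multiply through by the positive factor $\mathrm{e}^{\rho t}$ and work with $M(t)=\mathrm{e}^{\rho t}-1-\rho(t-a)$, which genuinely is increasing ($M'(t)=\rho(\mathrm{e}^{\rho t}-1)>0$) with $M(0)=\rho a>0$; your hedge ``up to a positive factor'' suggests you may have intended the latter, but the distinction is exactly where a naive execution breaks. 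Second, the bound $1+t-a>1$ you invoke for $g$ is false for $t\in(0,a)$; only $1+t-a>1-a>0$ holds there, and positivity of the numerator alone is not enough to get $g'(t)<0$. What one actually needs is $1-\mathrm{e}^{-\rho t}<\rho(1+t-a)$, which follows from $1-\mathrm{e}^{-\rho t}\le\rho t$ together with $1-a>0$, or equivalently from the auxiliary function $Q(t)=\rho(1+t-a)-1+\mathrm{e}^{-\rho t}$, which satisfies $Q(0)=\rho(1-a)>0$ and $Q'(t)=\rho(1-\mathrm{e}^{-\rho t})>0$. With these two corrections your argument is complete.
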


\begin{figure}[htbp!]
\begin{center}
\includegraphics[scale=.5]{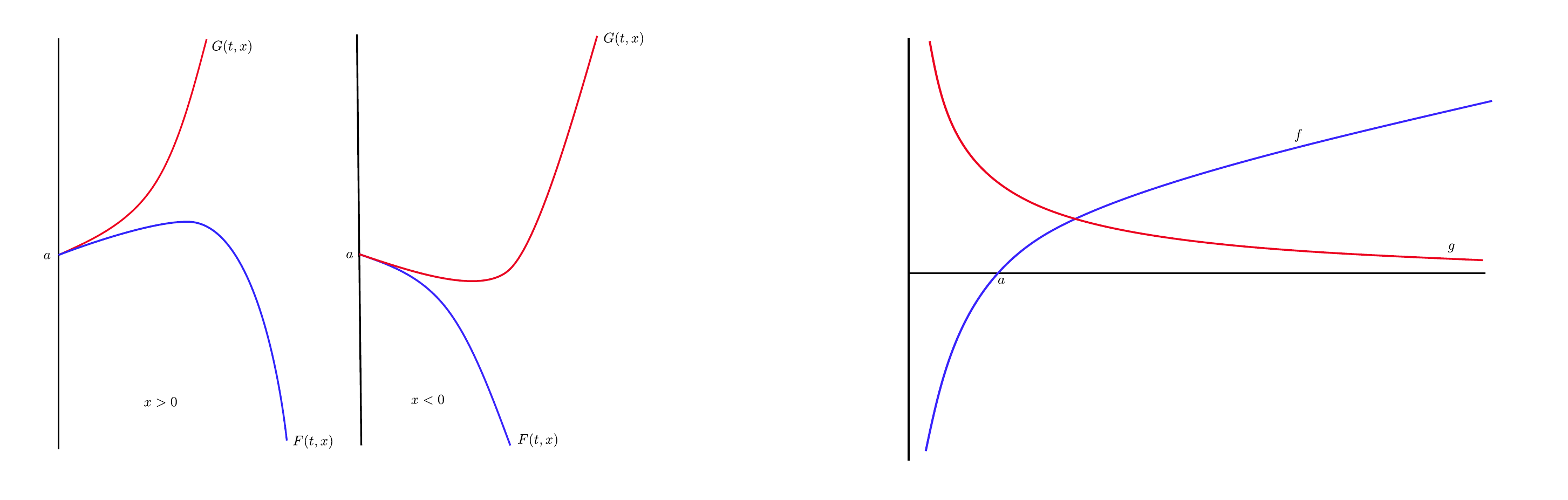}~\\
\caption{Behavior of $F, G$ and their associated functions $f, g$.}
\label{fige}
\end{center}
\end{figure}




    

\subsection{Minimal-time trajectories and their minimal-time}

Let us consider $P_0=(x_0, [y_0]), P_1=(x_1, [y_1])\in\CC_H$ with $x_0\leq x_1$ and consider 
$$\tau=\frac{x_1-x_0}{\rho}\hspace{.5cm}\mbox{ and }\hspace{.5cm} 2a\equiv (y_0-y_1)+\frac{1}{\rho}\rme^{x_0+\rho\tau}-\tau\in[0, 1).$$

By the previous sections, for $2a\in (0, 1)$, the functions $f:(a, +\infty)\rightarrow\R^+$ and $g:\R^+\rightarrow\R^+$ given by 
$$f(t):=\rho\frac{t-a}{1-\rme^{-\rho t}}\hspace{.5cm}\mbox{ and }\hspace{.5cm} g(t):=\rho\frac{1+t-a}{\rme^{\rho t}-1},$$
are bijections, with $g$ strictly decreasing and $f$ strictly increasing.

\begin{theorem}
\label{teo}
Under the previous notations, for the system $\Sigma_{\mathcal{C}_H}$ it holds:
\begin{enumerate}
    \item If $a=0$, then $t\mapsto \phi(t, P_0, \rho)$ is the minimal-time trajectory connecting $P_0$ and $P_1$ with associated minimal-time $\tau$.

    \item If $a\neq 0$ and $g^{-1}(\rme^{x_1})<f^{-1}(\rme^{x_0})$, then
    $$\left\{\begin{array}{lr}
       \phi(s, P_0, \rho)  &  \mbox{ for } s\in [0, \tau+g^{-1}(e^{x_1})] \\
        \phi\left(s, \phi\left(\tau+g^{-1}(e^{x_1}), P_0, \rho\right), -\rho\right) & \mbox{ for } s\in [0, g^{-1}(e^{x_1})] 
    \end{array}\right.,$$
    is the minimal-time trajectory connecting $P_0$ and $P_1$. The associated minimal-time is given by 
    $$S=\tau+2g^{-1}(e^{x_1}).$$

 \item If $a\neq 0$ and $g^{-1}(\rme^{x_1})>f^{-1}(\rme^{x_0})$, then 
    $$\left\{\begin{array}{lr}
       \phi(t, P_0, -\rho)  &  \mbox{ for } t\in [0, f^{-1}(\rme^{x_0})] \\
        \phi\left(t, \phi\left(f^{-1}(\rme^{x_0}), P_0, -\rho\right), \rho\right) & \mbox{ for } t\in [0, \tau+f^{-1}(\rme^{x_0})] 
    \end{array}\right.,$$
    is the minimal-time trajectory connecting $P_0$ and $P_1$. The associated minimal-time is given by 
    $$T=\tau+2f^{-1}(e^{x_0}).$$

 \item If $a\neq 0$ and $g^{-1}(\rme^{x_1})=f^{-1}(x_0)$
then both the curves in items 2. and 3. are minimal-time trajectories with associated minimal-time
$$\tau+2g^{-1}(x_1)=\tau+2f^{-1}(x_0)$$

\end{enumerate}

\end{theorem}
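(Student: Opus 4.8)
The plan is to feed the Pontryagin reduction of Section~3 into the elementary analysis of $f,g$ carried out in the race-to-$\Z$ subsection. By the PMP discussion, a minimal-time control is bang--bang with values in $\{-\rho,\rho\}$ and at most one switch, so the only possible optimal controls are the two constant ones and the one-switch patterns ${\bf u}^{-,+}$, ${\bf u}^{+,-}$ of (\ref{minimalT})--(\ref{minimalS}). Since $x_0\le x_1$, the constant control $-\rho$ moves the first coordinate the wrong way and never meets $\T_{x_1}$, while $\rho$ meets it only at time $\tau$, at the point $\phi(\tau,P_0,\rho)$, which by the Remark equals $P_1$ precisely when $a=0$. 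I would first settle item~1: if $a=0$ then $t\mapsto\phi(t,P_0,\rho)$ reaches $P_1$ at time $\tau$, and since $\dot x=u$ with $|u|\le\rho$, Lemma~\ref{fibers} makes $\tau=|x_1-x_0|/\rho$ an absolute lower bound for connecting $\T_{x_0}$ to $\T_{x_1}$; hence this no-switch trajectory is optimal.

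For $a\neq0$ the point $\phi(\tau,P_0,\rho)$ differs from $P_1$, so no constant control can reach $P_1$ and exactly one switch is forced. The two surviving candidates are ${\bf u}^{-,+}$, with time $T=\tau+2t_0$, and ${\bf u}^{+,-}$, with time $S=\tau+2s_1$, where $t_0,s_1$ are the least admissible switch durations, i.e. the first positive instants at which the partial maps $F(\cdot,x_0)$ and $G(\cdot,x_1)$ enter $\Z$. Consequently the minimal connecting time is $\tau+2\min\{t_0,s_1\}$, and the whole problem collapses to locating $\min\{t_0,s_1\}$ and deciding which pattern realizes it.

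The crux is the identity $\min\{t_0,s_1\}=\min\{f^{-1}(e^{x_0}),g^{-1}(e^{x_1})\}$. Using the equivalences $F(t,x_0)=0\Leftrightarrow e^{x_0}=f(t)$ and $G(t,x_1)=1\Leftrightarrow e^{x_1}=g(t)$, the Proposition of the race-to-$\Z$ subsection tells me that whichever of $t_0,s_1$ is the smaller is necessarily a clean root: either $F(t_0,x_0)=0$, so $t_0=f^{-1}(e^{x_0})$, or $G(s_1,x_1)=1$, so $s_1=g^{-1}(e^{x_1})$. Combined with the elementary bounds $t_0\le f^{-1}(e^{x_0})$ and $s_1\le g^{-1}(e^{x_1})$, this yields the displayed identity. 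Comparing the two inverse values then produces exactly the trichotomy of the statement: $g^{-1}(e^{x_1})<f^{-1}(e^{x_0})$ selects ${\bf u}^{+,-}$ and gives item~2 with $S=\tau+2g^{-1}(e^{x_1})$; the reverse strict inequality selects ${\bf u}^{-,+}$ and gives item~3 with $T=\tau+2f^{-1}(e^{x_0})$; and equality makes both patterns achieve the common value, producing the two distinct minimal-time trajectories of item~4. The explicit curves are then just the two patterns written out with $t_0=f^{-1}(e^{x_0})$, $t_1=\tau+t_0$ and $s_1=g^{-1}(e^{x_1})$, $s_0=\tau+s_1$.

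The main obstacle, and the reason the race Proposition is indispensable rather than a direct computation, is that $t_0$ and $s_1$ need not individually equal the inverse values: when $x_0>0$ the map $F(\cdot,x_0)$ may first rise above $1$ and enter $\Z$ at the value $1$ strictly before returning to its root $F=0$, and symmetrically $G(\cdot,x_1)$ may dip to $0$ first when $x_1<0$. What rescues the argument is precisely the content of that Proposition: such an overshoot can afflict only the \emph{larger} of the two times, so the smaller one, which alone fixes the minimal time, is always the clean root. For completeness I would also record that the existence of an optimal control (so that the PMP necessary conditions indeed apply to a genuine minimizer) follows from the compactness of $\Omega$ and the affine structure of the dynamics via a standard Filippov argument, and that the boundary case $x_0=x_1$, $\tau=0$ is covered verbatim by the same formulas, in agreement with the Example of Subsection~3.3.
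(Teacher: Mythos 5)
Your proof is correct and follows essentially the same route as the paper, which leaves Theorem \ref{teo} without a written proof and clearly intends it as the assembly of the PMP bang--bang/one-switch reduction, the reductions (A)--(C) to a same-fiber comparison of $t_0$ and $s_1$, and the ``race to $\Z$'' Proposition together with the monotone bijections $f,g$ --- exactly the assembly you carry out, including the key observation that the Proposition is what guarantees the \emph{smaller} of $t_0,s_1$ is the clean root $f^{-1}(\rme^{x_0})$ or $g^{-1}(\rme^{x_1})$. Your added remarks on Filippov existence of a minimizer and on the degenerate case $\tau=0$ are sensible supplements rather than a departure from the paper's argument.
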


\begin{corollary}
The system $\Sigma_{\mathcal{C}_H}$ admits points $P_0, P_1$ with two distinct minimal-time trajectories connecting them.

\end{corollary}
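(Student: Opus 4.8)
The statement is an immediate consequence of item~4 of Theorem~\ref{teo}: that item produces two \emph{distinct} minimal-time trajectories connecting $P_0$ and $P_1$ --- the curve of item~2 and the curve of item~3 --- precisely when $a\neq 0$ and $g^{-1}(e^{x_1})=f^{-1}(e^{x_0})$. Hence the plan is simply to exhibit a pair of distinct points $P_0=(x_0,[y_0])$, $P_1=(x_1,[y_1])\in\CC_H$ with $x_0\le x_1$ realizing these two conditions; everything then follows by invoking the theorem.

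For the construction I would first fix any $a\in(0,\tfrac12)$, so that $a\neq 0$ and $2a\in(0,1)$, placing us outside the regime of item~1. By the properties of $f$ and $g$ recorded in the preceding lemma, $f$ is a strictly increasing bijection from $(a,+\infty)$ onto $\R^+$ and $g$ is a strictly decreasing bijection from $\R^+$ onto $\R^+$; since $f(t)<g(t)$ for $t$ slightly above $a$ and $f(t)>g(t)$ for $t$ large, the two graphs cross at a unique point $t_c>a$. I would then choose $x_0$ sufficiently negative that $t^*:=f^{-1}(e^{x_0})$ lies below $t_c$ --- this is possible because $e^{x_0}\to 0^+$ forces $t^*\to a^+$ --- and set $x_1:=\ln g(t^*)$. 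By construction $f^{-1}(e^{x_0})=t^*=g^{-1}(e^{x_1})$, while $t^*<t_c$ gives $f(t^*)<g(t^*)$ and hence $x_0<x_1$, so in particular $x_0\le x_1$ as required.

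It remains only to fix the fibre coordinates. I would pick any $y_0,y_1$ with $y_0-y_1\equiv 2a-\tfrac{1}{\rho}e^{x_1}+\tau\pmod 1$, where $\tau=\tfrac{x_1-x_0}{\rho}$, so that the quantity $2a\equiv(y_0-y_1)+\tfrac1\rho e^{x_1}-\tau$ defining $a$ in Theorem~\ref{teo} indeed equals the value chosen above. Because $x_0<x_1$, the points $P_0$ and $P_1$ sit in different fibres $\T_{x_0}\neq\T_{x_1}$ and are therefore automatically distinct; item~4 of Theorem~\ref{teo} now yields the two announced minimal-time trajectories, which are genuinely different since one leaves $P_0$ with control $+\rho$ (entering the fibres $\T_x$ with $x>x_0$) and the other with control $-\rho$ (entering $x<x_0$). \textbf{The one delicate point} is exactly the guarantee $P_0\neq P_1$: this is what motivates the choice of distinct fibres. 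Had one instead tried to place both points in a single fibre $x_0=x_1=\ln f(t_c)$, the equality $g^{-1}(e^{x_1})=f^{-1}(e^{x_0})$ would hold at the crossing $t_c$, but one would then have to rule out the degenerate offset $y_0-y_1\equiv 0$, i.e.\ exclude the special values of $a$ for which $2a\equiv e^{x_1}/\rho\pmod 1$; working in two fibres sidesteps this entirely.
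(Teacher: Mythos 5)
Your proposal is correct and follows the same route as the paper, which states the corollary as an immediate consequence of item~4 of Theorem~\ref{teo} and offers no further proof. Your added construction of points actually realizing the case $a\neq 0$, $g^{-1}(\rme^{x_1})=f^{-1}(\rme^{x_0})$ (using that $f$ increases from $0$ on $(a,+\infty)$ while $g$ decreases from $+\infty$ to $0$, so the graphs cross once, and then adjusting $y_0-y_1$ to realize the prescribed $a$) is sound and in fact fills in the nonvacuousness check that the paper leaves implicit; note also that the two trajectories are automatically distinct because $f^{-1}(\rme^{x_0})>a>0$ and $g^{-1}(\rme^{x_1})>0$, so both curves genuinely switch.
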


\begin{remark}
    It is important to note that Theorem \ref{teo} gives us a way to explicitly calculate the minimal-time connecting two given points in $\CC_H$ through the inverse of the functions $f$ and $g$.
\end{remark}

\end{document}